\newcommand{\F}{\mathbb{F}}        
\newcommand{\SL}{\operatorname{SL}}
\newcommand{\GL}{\operatorname{GL}}
\newtheorem{lause}{Theorem}[section]
\newtheorem{lemma}[lause]{Lemma}
\newtheorem{seur}[lause]{Corollary}
\newtheorem*{lause*}{Theorem}
\theoremstyle{definition}
\theoremstyle{remark}
\newtheorem{remark}[lause]{Remark}
\newtheorem{esim}[lause]{Example} 
\newtheorem*{mot*}{Motivation}
\newtheorem*{acknow*}{Acknowledgements}
\numberwithin{equation}{section}
\begin{document}

\title[Systems of imprimitivity]{Systems of imprimitivity for wreath products}

\author{Mikko Korhonen}
\address{Department of Mathematics, Southern University of Science and Technology, \text{Shenzhen} 518055, Guangdong, P. R. China}
\email{korhonen\_mikko@hotmail.com {\text{\rm(Korhonen)}}}
\thanks{}

\author{Cai Heng Li}
\address{SUSTech International Center for Mathematics and Department of Mathematics, Southern University of Science and Technology, Shenzhen 518055, Guangdong, \text{P.R. China}}
\email{lich@sustech.edu.cn {\text{\rm(Li)}}}
\thanks{Partially supported by NSFC grant 11931005.}

\subjclass[2010]{20H20, 20C99}

\date{\today}

\begin{abstract}

Let $G$ be an irreducible imprimitive subgroup of $\operatorname{GL}_n(\mathbb{F})$, where $\mathbb{F}$ is a field. Any system of imprimitivity for $G$ can be refined to a \emph{nonrefinable} system of imprimitivity, and we consider the question of when such a refinement is unique. Examples show that $G$ can have many nonrefinable systems of imprimitivity, and even the number of components is not uniquely determined. We consider the case where $G$ is the wreath product of an irreducible primitive $H \leq \operatorname{GL}_d(\mathbb{F})$ and transitive $K \leq S_k$, where $n = dk$. We show that $G$ has a unique nonrefinable system of imprimitivity, except in the following special case: $d = 1$, $n = k$ is even, $|H| = 2$, and $K$ is a subgroup of $C_2 \wr S_{n/2}$. As a simple application, we prove results about inclusions between wreath product subgroups.

\end{abstract}

\vspace*{-2ex}
\maketitle

\section{Introduction}

Let $G$ be an irreducible subgroup of $\GL(V)$, where $V$ is a finite-dimensional vector space over a field $\F$. We say that $G$ is \emph{imprimitive}, if there exists a decomposition $$V = W_1 \oplus \cdots \oplus W_k$$ with $k > 1$ such that $G$ acts on the set $\Gamma = \{W_1, \ldots, W_k\}$ of the summands $W_i$. In this case $\Gamma$ is called a \emph{system of imprimitivity} for $G$. A system of imprimitivity $\{Z_1, \ldots, Z_{\ell}\}$ is said to be a \emph{refinement} of $\Gamma$, if each $W_i$ is a direct sum of some $Z_j$'s. If no proper refinement of $\Gamma$ exists, we say that $\Gamma$ is \emph{nonrefinable}.

Is a nonrefinable system of imprimitivity of $G$ unique? Examples show that the answer is no in general. Even the number of summands in a nonrefinable system is not uniquely determined --- we provide examples of such behaviour in the next section.

Let $\Gamma = \{W_1, \ldots, W_k\}$ be a nonrefinable system of imprimitivity for $G$. Then it is a basic result \cite[Lemma 15.5]{SuprunenkoBook} that $G$ is conjugate to a subgroup of $N_G(W_1) \wr K$, where $K$ is the image of $G$ in the symmetric group $S_k$. Since $G$ is irreducible, it follows that $K$ is transitive and furthermore the action of $N_G(W_1)$ on $W_1$ is nontrivial, irreducible, and primitive \cite[Theorem 15.1, Lemma 15.4]{SuprunenkoBook}. In the case where $G$ is equal to such a wreath product, we have the following positive result which will be proven in this note.

\begin{lause}\label{thm:uniquesystemgeneralcase}Suppose that $n = dk$, where $k > 1$. Let $H$ be a nontrivial irreducible primitive subgroup of $\GL_d(\F)$ and let $K \leq S_k$ be transitive, so that the subgroup $G = H \wr K$ of $\GL_n(\F)$ is irreducible. Then $G$ has a unique nonrefinable system of imprimitivity, except when $d = 1$, $n = k$ is even, $|H| = 2$, and $K$ is a subgroup of $C_2 \wr S_{n/2}$.\end{lause}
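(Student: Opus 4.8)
The plan is to recognise $\Gamma_0 = \{W_1,\dots,W_k\}$ as an intrinsic object attached to the base group. Writing $B = H^k \trianglelefteq G$ for the base group of $\Gamma_0$, I would first observe that each $W_i$ is an irreducible $B$-module on which the $i$-th factor acts as $H$ while all other factors act trivially; since $H$ is nontrivial, the $W_i$ are pairwise non-isomorphic, so $V = W_1 \oplus \cdots \oplus W_k$ is precisely the decomposition of $V$ into $B$-isotypic components, each occurring with multiplicity one. Because $N_G(W_1)$ acts on $W_1$ exactly as $H$, which is primitive, the system $\Gamma_0$ admits no proper refinement, so it is a nonrefinable system.

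Next let $\Gamma = \{U_1,\dots,U_m\}$ be an arbitrary nonrefinable system and let $B_\Gamma = \ker(G \to \operatorname{Sym}(\Gamma))$ be its base group, a normal subgroup of $G$. Using the wreath decomposition $G \le N_G(U_1) \wr K_\Gamma$ and the fact (from the cited results) that $H' := N_G(U_1)|_{U_1}$ is irreducible and primitive, I would apply Clifford theory to the normal subgroup $B_\Gamma|_{U_1} \trianglelefteq H'$: its isotypic components in $U_1$ are permuted transitively by $H'$, so primitivity forces $U_1$ to be $B_\Gamma$-isotypic, and likewise every $U_j$. The whole theorem then reduces to the single assertion $B_\Gamma = B$. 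Indeed, once $B_\Gamma = B$, each $U_j$ is a nonzero $B$-isotypic submodule of $V$; by multiplicity one and pairwise non-isomorphism of the $W_i$ it lies in a single isotypic component $W_i$ and, being a submodule of the irreducible $W_i$, equals it, whence $\Gamma = \Gamma_0$.

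The heart of the argument is therefore to prove $B = B_\Gamma$. Both are normal in $G$, so $[B, B_\Gamma] \le B_0 := B \cap B_\Gamma$, and $B/B_0 \hookrightarrow \operatorname{Sym}(\Gamma)$ and $B_\Gamma/B_0 \hookrightarrow K$ record how each base group permutes the blocks of the other system; $K$-invariance of $B_0$ makes the projections $\pi_i(B_0) \le H$ a single normal subgroup $H_0 \trianglelefteq H$ independent of $i$. I would analyse, for a block $U = U_1$ with support $S = \{\, i : p_i(U) \ne 0 \,\}$ (where $p_i \colon V \to W_i$ is the $B$-equivariant projection), how a base element can move $U$. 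Using that $p_i(U)$ is invariant under the image in $H$ of the base stabiliser of $U$, together with irreducibility and primitivity of $H$ on $W_i$, one finds that a base element moves $U$ only by rescaling a single coordinate in a way that still lands on a block; when $\dim W_i = d \ge 2$ this is incompatible with primitivity, and when $d = 1$ it is possible exactly when the scalar involved is $-1$, i.e. $H = \{\pm 1\}$. Tracking this across all blocks shows that a base element permuting blocks of $\Gamma$ forces $d = 1$, $|H| = 2$, and a pairing of the $n$ coordinates respected by $K$, that is $K \le C_2 \wr S_{n/2}$; outside this configuration $B = B_\Gamma$ and $\Gamma = \Gamma_0$.

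I expect the main obstacle to be precisely this last analysis of the monomial case $d = 1$: one must show that an ``off-axis'' block such as $\langle e_i + e_j \rangle$ can be globally stabilised by $G$ only under the stated pairing condition, and that $|H| > 2$ destroys any such block because multiplying one coordinate by a non-involutory scalar carries $e_i + e_j$ off every candidate block. Conversely, to show the exception is genuine I would exhibit, in the case $d = 1$, $|H| = 2$, $K \le C_2 \wr S_{n/2}$, the distinct nonrefinable system given by the diagonal lines $\langle e_i + e_j \rangle$ and $\langle e_i - e_j \rangle$ over the paired coordinates; the dihedral group $\{\pm 1\} \wr S_2$ of order $8$ on $\F^2$, with its coordinate axes and its two diagonals, is the prototype showing that uniqueness must fail there.
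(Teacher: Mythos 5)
Your overall framing is sound and broadly parallel to the paper's: you identify $\Gamma_0=\{W_1,\dots,W_k\}$ with the isotypic decomposition of $V$ under the base group $B$ (the $W_i$ are pairwise non-isomorphic irreducible $\F[B]$-modules, which is the paper's Lemma \ref{lemma:submodulesofmfree}), you correctly reduce the theorem to showing that $B=B_\Gamma$ outside the exceptional case, and your closing description of the genuine exception matches Remark \ref{remark:exception}. The problem is that the entire difficulty of the theorem is concentrated in the step you dispose of in one sentence: ``one finds that a base element moves $U$ only by rescaling a single coordinate in a way that still lands on a block.'' Nothing in your sketch proves this, and it is not a local statement about single base elements: a priori a block $U$ could have support of any size (e.g.\ $\langle e_1+e_2+e_3+e_4\rangle$, or $\langle e_1+e_2,\,e_3+e_4\rangle$ when $d=1$), and a base element could act nontrivially on several coordinates of $U$ at once. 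What rules these out is a global counting argument: the $B$-orbit of such a $U$ has size at least $\prod_t|\Pi_t|\ge 2^{s-1}$ (where $s$ is the size of a minimal support and $\Pi_t$ are the relevant $H$-orbits), while the number of blocks in a $B$-orbit whose sum is $W_{j_1}\oplus\cdots\oplus W_{j_s}$ is at most $s$. That upper bound is itself a nontrivial inductive statement about direct products acting on sums of primitive modules (the paper's Lemma \ref{lemma:directprodcrimpr}, proved by a page-long induction); the naive bound ``number of blocks $\le\dim$'' gives only $r\le ds$, which does not force $s=2$ when $d\ge 2$. Your proposal contains no counterpart to this lemma, so the case $|H|>2$ and the case of blocks with large support are not actually excluded.

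Two smaller points. First, once $s=2$ and the relevant $H$-orbit has size $2$, the conclusion $d=1$ comes from irreducibility (the line $\langle w_1\rangle$ is $H$-invariant), not from primitivity as you suggest. Second, in the residual case $d=1$, $|H|=2$ you still must show that every block of $\Gamma$ is one-dimensional before you can extract the pairing of coordinates giving $K\le C_2\wr S_{n/2}$; the paper does this with a separate fixed-point-space argument adapted from Shephard--Todd (an element negating one coordinate fixes a hyperplane, hence stabilizes any block of dimension $\ge 2$ it meets, forcing a contradiction). In your support-based setup this step may be absorbed into the case analysis, but as written it is not addressed. In short: right skeleton, but the quantitative core of the proof --- the orbit count against Lemma \ref{lemma:directprodcrimpr} --- is missing and is exactly where the theorem is won.
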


A similar result was previously claimed in \cite[Theorem 2]{KonjuhVS}, but unfortunately the proof given there is based on a false result (Remark \ref{remark:notekonjuh}). Uniqueness for systems of imprimitivity has been considered by some authors in the context of finite complex reflection groups. See for example \cite[Lemma 2.7]{Cohen76} or \cite[Lemma 1.1]{KemperMalle}, which are related to Theorem \ref{thm:uniquesystemgeneralcase} in the case where $d = 1$, $K = S_n$, and $H$ is finite cyclic. The exceptional case of Theorem \ref{thm:uniquesystemgeneralcase} is also related to examples of wreath products where the base group is not a characteristic subgroup, see \cite[Theorem 5.1]{Gross88} and \cite[Theorem 9.12]{Neumann64}.

The proof of Theorem \ref{thm:uniquesystemgeneralcase} will be given in Section \ref{section:mainproof}. As a simple application, we prove results about maximal solvable subgroups of $\GL_n(\F)$ (Corollary \ref{cor:imprimitivemaximalsoluble}) and inclusions between wreath product subgroups (Corollary \ref{cor:application2}) in Section \ref{section:applications}.

\section{Examples of nonuniqueness}

In general a nonrefinable system of imprimitivity $\Gamma$ is not unique for $G$, and an infinite family of examples is provided by the exception in Theorem \ref{thm:uniquesystemgeneralcase} (see Remark \ref{remark:exception} in the next section). In this family of examples, the number of components in a nonrefinable system of imprimitivity is uniquely determined. It turns out that it is also possible for $G$ to have nonrefinable systems of imprimitivity with different numbers of components. The following provides the smallest possible examples.

\begin{esim}\label{esim:examplesystems}Let $G = \GL_2(3)$ and let $q$ be a prime power such that $q \equiv 1 \mod{6}$. Then one can embed $G \leq \GL_4(q)$ such that for $V = \F_q^4$, we have: 

\begin{enumerate}[\normalfont (i)]
\item $G$ is irreducible;
\item $V = Z_1 \oplus Z_2 \oplus Z_3 \oplus Z_4$, such that $\dim Z_i = 1$ and $G$ acts on $\{ Z_1, Z_2, Z_3, Z_4\}$;
\item $V = W_1 \oplus W_2$, such that $\dim W_i = 2$ and $G$ acts on $\{W_1, W_2\}$;
\item Both systems of imprimitivity in (ii) and (iii) are nonrefinable.
\end{enumerate}
\end{esim}

\begin{proof}Let $x, y \in G$ be as follows: \begin{align*}x &= \begin{pmatrix} 1 & 0 \\ 0 & -1 \end{pmatrix}, & y &= \begin{pmatrix} -1 & 1 \\ 0 & -1 \end{pmatrix}. \end{align*} Then $K = \langle x, y \rangle \cong D_{12}$, with $K / [K,K] = \langle \overline{x},\ \overline{y} \rangle \cong C_2 \times C_2$. Let $W$ be the $1$-dimensional $\F_q[K]$-module corresponding to the linear character $\theta: K \rightarrow \F_q^\times$ such that $\theta(x) = 1$ and $\theta(y) = -1$. Consider the induced $\F_q[G]$-module $V = \operatorname{Ind}_{K}^G(W)$. We have $\dim V = [G:K] = 4$, and a calculation shows that $V$ is a faithful irreducible $\F_q[G]$-module, so claim (i) holds. Since we are inducing a $1$-dimensional module, it is clear that we get a decomposition $V = Z_1 \oplus Z_2 \oplus Z_3 \oplus Z_4$ as in (ii), which is nonrefinable since $\dim Z_i = 1$.

Let $H = \SL_2(3)$, so $[G:H] = 2$ and $H \trianglelefteq G$. Note that by Maschke's theorem $\F_q[H]$ is semisimple. Thus by examining the ordinary character table of $H$, we can see that $\F_q$ is a splitting field for $H$, since it contains a primitive cube root of unity. Then by looking at the character degrees, we conclude that there is no irreducible $\F_q[H]$-module of dimension $4$. 

In particular, the restriction of $V$ to $H$ is not irreducible. Thus by Clifford theory, the restriction decomposes as $$V = W_1 \oplus W_2,$$ where $W_1$, $W_2$ are non-isomorphic irreducible $\F_q[H]$-modules with $\dim W_i = 2$. Then $G$ acts on $\{W_1, W_2\}$ and claim (iii) holds. 

What remains is to check that $V = W_1 \oplus W_2$ provides a nonrefinable system of imprimitivity for $G$. Equivalently, we need to check that the action of $H$ on $W_1$ is primitive, but this is immediate from the fact that $H$ does not have a subgroup of index $2$.\end{proof}

\begin{remark}\label{remark:notekonjuh}The paper \cite{KonjuhVS} claims in its main theorem that for an irreducible imprimitive subgroup of $\GL(V)$, the number of components in a nonrefinable system of imprimitivity is unique. Example \ref{esim:examplesystems} shows that the claim is false, and the mistake in \cite{KonjuhVS} is on p. 6, line 7: the author argues that $N = N_1 \oplus \cdots \oplus N_\ell$ since $N_i \cap N_j = 0$ for $i \neq j$ (which is in general false, unless $\ell = 2$.)\end{remark}

\section{Systems of imprimitivity}\label{section:mainproof}

In this section, we will prove Theorem \ref{thm:uniquesystemgeneralcase}. We first need two lemmas. The first one of these is well known and not difficult to prove, so we will omit the proof.

\begin{lemma}\label{lemma:submodulesofmfree}
Let $M$ be a group and suppose that $V$ is a completely reducible $\F[M]$-module such that $V = W_1 \oplus \cdots \oplus W_k$, where $W_1$, $\ldots$, $W_k$ are irreducible and pairwise non-isomorphic $\F[M]$-modules. Then any nonzero $\F[M]$-submodule of $V$ is of the form $W_{i_1} \oplus \cdots \oplus W_{i_{\alpha}}$, for some $\alpha > 0$ and $1 \leq i_1 < \cdots < i_{\alpha} \leq k$.
\end{lemma}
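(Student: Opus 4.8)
The plan is to argue via the coordinate projections together with a classification of the irreducible submodules of $V$. First I would recall the standard fact that every submodule of a completely reducible module is again completely reducible, so the nonzero submodule $U$ in question decomposes as a direct sum $U = U_1 \oplus \cdots \oplus U_r$ of irreducible $\F[M]$-modules. Writing $\pi_i \colon V \to W_i$ for the projection associated to the given decomposition $V = W_1 \oplus \cdots \oplus W_k$, I note that each $\pi_i$ is an $\F[M]$-homomorphism.

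The key step is to show that every irreducible submodule $T$ of $V$ equals one of the $W_i$. For each $i$, the restriction $\pi_i|_T \colon T \to W_i$ is a homomorphism between irreducible modules, hence is either zero or an isomorphism. Since $T \neq 0$, at least one $\pi_i|_T$ is an isomorphism, so $T \cong W_i$; because the $W_i$ are pairwise non-isomorphic, exactly one index $i_0$ can occur. Thus $\pi_i|_T = 0$ for all $i \neq i_0$, which forces $T \subseteq W_{i_0}$, and irreducibility of $W_{i_0}$ then gives $T = W_{i_0}$. This is precisely the point where the pairwise non-isomorphism hypothesis is essential: without it, a diagonal submodule of $W_i \oplus W_j$ with $W_i \cong W_j$ would be an irreducible submodule equal to no single summand, breaking the conclusion.

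Applying this to the summands $U_1, \ldots, U_r$ of $U$, each $U_j$ equals some $W_{i_j}$. The indices $i_j$ must be distinct, since two equal summands would intersect nontrivially, contradicting the directness of $U = U_1 \oplus \cdots \oplus U_r$. Hence $U = W_{i_1} \oplus \cdots \oplus W_{i_r}$ with distinct indices, which after reordering is the asserted form with $\alpha = r$. I expect no genuine obstacle: the only subtlety is to invoke the non-isomorphism at the right moment, and to upgrade the isomorphism $T \cong W_{i_0}$ to the equality $T = W_{i_0}$ that the statement demands.
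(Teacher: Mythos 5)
Your proof is correct. The paper deliberately omits a proof of this lemma, calling it ``well known,'' so there is nothing to compare against; your argument --- decompose the submodule into irreducibles, use Schur's lemma on the coordinate projections to show each irreducible submodule of $V$ lies in (hence equals) a unique $W_{i_0}$, and note the resulting indices must be distinct --- is exactly the standard proof, with the role of the pairwise non-isomorphism hypothesis correctly identified.
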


\begin{lemma}\label{lemma:directprodcrimpr}

Let $M = H_1 \times \cdots \times H_k$ be a group, and let $V$ be an $\F[M]$-module such that the following hold: 
	\begin{enumerate}[\normalfont (i)]
	 \item $V = W_1 \oplus \cdots \oplus W_k$, where $W_i$ is a nontrivial irreducible $\F[H_i]$-module for all $1 \leq i \leq k$;
	 \item the action of $H_i$ on $W_i$ is primitive for all $1 \leq i \leq k$; and
	 \item the direct factors $H_j$ act trivially on $W_i$ for all $j \neq i$.  
 \end{enumerate}

If $V = Q_1 \oplus \cdots \oplus Q_\ell$ and $M$ acts on $\{Q_1, \ldots, Q_\ell\}$, then we have $\ell \leq k$.
\end{lemma}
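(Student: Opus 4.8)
The plan is to extract the $\F[M]$-module structure first, then reduce to the case where $M$ permutes $\{Q_1,\dots,Q_\ell\}$ transitively, and finally to dispose of the transitive case by a fixed-point dimension count. To begin, each $W_i$ is an irreducible $\F[M]$-module (since the factors $H_j$ with $j\neq i$ act trivially, the $\F[M]$-submodules of $W_i$ are exactly its $\F[H_i]$-submodules), and $W_1,\dots,W_k$ are pairwise non-isomorphic as $\F[M]$-modules (an $\F[M]$-isomorphism $W_i\to W_j$ with $i\neq j$ would force the nontrivial action of $H_i$ on $W_i$ to be trivial). Hence $V$ is completely reducible with pairwise non-isomorphic irreducible summands, so Lemma \ref{lemma:submodulesofmfree} applies: every nonzero $\F[M]$-submodule is a sum of a subset of the $W_i$. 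Now decompose $\{Q_1,\dots,Q_\ell\}$ into $M$-orbits $\mathcal{O}_1,\dots,\mathcal{O}_r$. Each $U_t:=\bigoplus_{Q_j\in\mathcal{O}_t}Q_j$ is an $\F[M]$-submodule, hence $U_t=\bigoplus_{i\in S_t}W_i$ for some $S_t\subseteq\{1,\dots,k\}$, and since $V=\bigoplus_t U_t$ is multiplicity-free the sets $S_t$ partition $\{1,\dots,k\}$. For $i\notin S_t$ the factor $H_i$ acts trivially on every $W_m$ with $m\in S_t$, hence on $U_t$, so $M$ acts on $U_t$ through $\prod_{i\in S_t}H_i$, transitively on $\mathcal{O}_t$. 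Thus it suffices to treat the transitive case: applying the transitive bound $\ell\le k$ to each triple $\big(\prod_{i\in S_t}H_i,\ U_t,\ \mathcal{O}_t\big)$ gives $|\mathcal{O}_t|\le|S_t|$, and summing yields $\ell=\sum_t|\mathcal{O}_t|\le\sum_t|S_t|=k$.

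For the transitive case, assume $M$ is transitive on $\Omega=\{Q_1,\dots,Q_\ell\}$. The projections $\pi_i:V\to W_i$ are $\F[M]$-module maps, so transitivity forces the set $\{i:\pi_i(Q_j)\neq 0\}$ to be independent of $j$; as $\sum_j\pi_i(Q_j)=\pi_i(V)=W_i\neq 0$, this set is all of $\{1,\dots,k\}$, i.e.\ $\pi_i(Q_j)\neq 0$ for all $i,j$. Now fix $i$ and consider the $H_i$-orbits on $\Omega$: the span of each orbit is an $\F[H_i]$-submodule, and together these spans decompose $V\!\mid_{H_i}$. Since $V\!\mid_{H_i}=W_i\oplus Y_i$, where $Y_i$ is the trivial sum of the $W_m$ with $m\neq i$ and $W_i$ is nontrivial irreducible, the $W_i$-isotypic component of $V\!\mid_{H_i}$ equals $W_i$ and lies in a single orbit-span; every other orbit-span $U$ then has no composition factor isomorphic to $W_i$, so $\operatorname{Hom}_{\F[H_i]}(U,W_i)=0$ and $\pi_i$ vanishes on $U$. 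This contradicts $\pi_i(Q_j)\neq 0$ unless there is only one $H_i$-orbit, so I conclude that each factor $H_i$ acts transitively on $\Omega$.

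With $H_i$ transitive on the $\ell$ summands $Q_j$, the projection onto $Q_1$ embeds $V^{H_i}$ into $Q_1$: a fixed vector is determined by its $Q_1$-component, because for each $j$ one may choose $h\in H_i$ with $hQ_1=Q_j$ and read off the $Q_j$-component from $h$ acting on the $Q_1$-component. Hence $\dim V^{H_i}\le\dim Q_1=\dim V/\ell$. On the other hand $V^{H_i}=Y_i$, so $\dim V^{H_i}=\dim V-\dim W_i$, giving $\dim W_i\ge \dim V\,(\ell-1)/\ell$ for every $i$. Summing over $i$ and using $\sum_i\dim W_i=\dim V$ yields $(k-1)\ell\le k$, that is $\ell\le k/(k-1)$, which forces $\ell\le k$ for $k\ge 2$ (in fact $\ell\le 2$, and $\ell=1$ once $k\ge 3$). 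The only case this argument misses is $k=1$, which arises precisely from orbits $\mathcal{O}_t$ with $|S_t|=1$; there $V=W_1$ is a primitive $\F[H_1]$-module, so hypothesis (ii) forbids any nontrivial system of imprimitivity and $\ell=1$. This is where primitivity is genuinely used.

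I expect the main obstacle to be the transitivity claim in the middle step — that each individual factor $H_i$ acts transitively on $\Omega$ — since transitivity of the product $M$ does not by itself imply transitivity of the factors, and the argument must exploit the module structure (the pairwise non-isomorphic irreducible $W_i$ together with the equivariant projections $\pi_i$). Once that is in hand, both the orbit reduction of the first step and the dimension count of the last step are routine.
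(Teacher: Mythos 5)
Your proof is correct, but it disposes of the crucial transitive case by a genuinely different argument from the paper's. Both proofs start the same way: the $W_i$ are pairwise non-isomorphic irreducible $\F[M]$-modules, Lemma \ref{lemma:submodulesofmfree} identifies every $\F[M]$-submodule as a partial sum of the $W_i$, and the orbit decomposition reduces everything to the case where $M$ is transitive on $\{Q_1,\ldots,Q_\ell\}$. From there the paper runs an induction on $k$: it takes $k_0$ minimal with $Q_j\cap(W_{j_1}\oplus\cdots\oplus W_{j_{k_0}})\neq 0$, falls back on the inductive hypothesis when $k_0<k$, and when $k_0=k$ shows that the projections $Q_j\to W_i$ and (by a symmetric minimality argument) $W_i\to Q_j$ are all injective, forcing $\dim Q_j=\dim W_i$ and hence $\ell=k$. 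You instead prove --- via the equivariant projections $\pi_i$ and the observation that the $W_i$-isotypic component of $V\vert_{H_i}$ lies in a single $H_i$-orbit span --- that each factor $H_i$ is individually transitive on $\{Q_1,\ldots,Q_\ell\}$, and then the fixed-point count $\dim V-\dim W_i=\dim V^{H_i}\le\dim V/\ell$, summed over $i$, gives $(k-1)\ell\le k$; all of these steps check out. Your route avoids induction entirely (the orbit reduction is a direct splitting rather than an inductive step) and proves strictly more than the lemma asserts: in the transitive case with $k\ge 2$ you obtain $\ell\le 2$, with $\ell=2$ possible only when $k=2$ and all the dimensions coincide. That is information the paper only recovers later, inside the proof of Theorem \ref{thm:uniquesystemgeneralcase}, where the analogous parameters are forced to satisfy $s=r=2$. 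Like the paper, you invoke primitivity only in the degenerate situation $k=1$ (equivalently, the singleton orbits with $|S_t|=1$), which is indeed the only place it is needed.
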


\begin{proof}
Note that the $W_i$ are irreducible and pairwise non-isomorphic $\F[M]$-modules, so by Lemma \ref{lemma:submodulesofmfree} any $\F[M]$-submodule of $V$ is a direct sum $W_{i_1} \oplus \cdots \oplus W_{i_\alpha}$ for some $1 \leq i_1 < \cdots < i_{\alpha} \leq k$. (We will use this fact throughout the proof.)

For the proof of the lemma, we proceed by induction on $k$. In the case $k = 1$, if $V = Q_1 \oplus \cdots \oplus Q_\ell$ and $M$ acts on $\{Q_1, \ldots, Q_\ell\}$, then $\ell = 1$ since $M = H_1$ acts primitively on $V = W_1$. Suppose then that $k > 1$.

Consider first the case where $M$ is not transitive on $\{Q_1, \ldots, Q_\ell\}$. Let $$\{Q_1^{(1)}, \ldots, Q_{d_1}^{(1)}\}, \ldots, \{Q_1^{(s)}, \ldots, Q_{d_s}^{(s)}\}$$ be the orbits of $M$ on $\{Q_1, \ldots, Q_\ell\}$. Then $$V = (Q_1^{(1)} \oplus \cdots \oplus Q_{d_1}^{(1)}) \oplus \cdots \oplus (Q_1^{(s)} \oplus \cdots \oplus Q_{d_s}^{(s)})$$ where by Lemma \ref{lemma:submodulesofmfree}, for all $1 \leq i \leq s$ we have $$Q_1^{(i)} \oplus \cdots \oplus Q_{d_i}^{(i)} = W_{1}^{(i)} \oplus \cdots \oplus W_{\alpha_i}^{(i)}$$ for some subset $\{W_{1}^{(i)}, \ldots, W_{\alpha_i}^{(i)} \}$ of $\{W_1, \ldots, W_k\}$. Now the action of the direct product $H_{1}^{(i)} \times \cdots \times H_{\alpha_i}^{(i)}$ on $W_{1}^{(i)} \oplus \cdots \oplus W_{\alpha_i}^{(i)}$ satisfies conditions (i) -- (iii) of the lemma, so $d_i \leq \alpha_i$ for all $i$ by induction. Since \begin{align*} \ell &= d_1 + \cdots + d_s, \\ k &= \alpha_1 + \cdots + \alpha_s, \end{align*} we conclude that $\ell \leq k$.

Thus we can assume that $M$ acts transitively on $\{Q_1, \ldots, Q_\ell\}$. Let $k_0 > 0$ be minimal such that $$Q_j \cap (W_{j_1} \oplus \cdots \oplus W_{j_{k_0}}) \neq 0$$ for some $1 \leq j \leq \ell$ and $1 \leq j_1 < \cdots < j_{k_0} \leq k$. 

For $1 \leq i \leq \ell$, set $Q_i' := Q_i \cap (W_{j_1} \oplus \cdots \oplus W_{j_{k_0}})$. Then $M$ acts on $\{Q_1', \ldots, Q_{\ell}'\}$, so $Q_1' \oplus \cdots \oplus Q_{\ell}'$ is an $\F[M]$-submodule of $W_{j_1} \oplus \cdots \oplus W_{j_{k_0}}$. By Lemma \ref{lemma:submodulesofmfree} and the minimality of $k_0$, we have in fact $$W_{j_1} \oplus \cdots \oplus W_{j_{k_0}} = Q_1' \oplus \cdots \oplus Q_{\ell}'.$$ If $k_0 < k$, then by induction we have $\ell \leq k_0$ and so $\ell < k$. Thus we can assume that $k_0 = k$, so \begin{equation}\label{eq:zerointersect}Q_i \cap (W_{j_1} \oplus \cdots \oplus W_{j_{k-1}}) = 0\end{equation} for all $1 \leq i \leq \ell$ and $1 \leq j_1 < \cdots < j_{k-1} \leq k$. In particular, the projection of $Q_j$ into any $W_i$ is injective, so \begin{equation}\label{eq:firstineq}\dim Q_j \leq \dim W_i\end{equation} for all $i$ and $j$.

Next let $s > 0$ be minimal such that $$W_i \cap (Q_{i_1} \oplus \cdots \oplus Q_{i_{s}}) \neq 0$$ for some $1 \leq i \leq k$ and $1 \leq i_1 < \cdots < i_{s} \leq \ell$. 

We will show that for all $j \neq i$, the subgroup $H_j$ acts on $Q_{i_1} \oplus \cdots \oplus Q_{i_{s}}$ nontrivially. Let $h \in H_j$. We have $h(Q_{i_1} \oplus \cdots \oplus Q_{i_s}) = Q_{i_1'} \oplus \cdots \oplus Q_{i_s'}$ for some $1 \leq i_1' < \cdots < i_{s}' \leq \ell$. Since $H_j$ acts trivially on $W_i$, it follows that $$W_i \cap (Q_{i_1} \oplus \cdots \oplus Q_{i_{s}}) = W_i \cap (Q_{i_1} \oplus \cdots \oplus Q_{i_{s}}) \cap (Q_{i_1'} \oplus \cdots \oplus Q_{i_s'}).$$ Thus $Q_{i_1} \oplus \cdots \oplus Q_{i_{s}} = Q_{i_1'} \oplus \cdots \oplus Q_{i_s'}$ by the minimality of $s$, so $H_j$ acts on $Q_{i_1} \oplus \cdots \oplus Q_{i_{s}}$.

To see that the action is nontrivial, let $v \in Q_{i_1}$ be nonzero. By~\eqref{eq:zerointersect}, we have $v = w_1 + \cdots + w_k$ where $w_r \in W_r$ and $w_r \neq 0$ for all $1 \leq r \leq k$. Since $W_j$ is a nontrivial irreducible $\F[H_j]$-module, we have $gw_j \neq w_j$ for some $g \in H_j$. Then $gv \neq v$, so $H_j$ acts nontrivially on $Q_{i_1} \oplus \cdots \oplus Q_{i_{s}}$. Consequently $W_j$ must be contained in $Q_{i_1} \oplus \cdots \oplus Q_{i_{s}}$. In particular $W_j \cap (Q_{i_1} \oplus \cdots \oplus Q_{i_{s}}) \neq 0$, so by repeating the same arguments we conclude that $W_i$ is also contained in $Q_{i_1} \oplus \cdots \oplus Q_{i_{s}}$.

Therefore $Q_{i_1} \oplus \cdots \oplus Q_{i_{s}} = W_1 \oplus \cdots \oplus W_k$, so $s = \ell$ and $W_i \cap (Q_{i_1} \oplus \cdots \oplus Q_{i_{\ell-1}}) = 0$ for all $i$ and $1 \leq i_1 < \cdots < i_{\ell-1} \leq \ell$. Hence the projection of $W_i$ into any $Q_j$ is injective, so $\dim W_i \leq \dim Q_j$ for all $i$ and $j$.  By~\eqref{eq:firstineq} we conclude that $\dim Q_j = \dim W_i$ for all $i$ and $j$, from which it follows that $\ell = k$. This completes the proof of the lemma.\end{proof}

\begin{proof}[Proof of Theorem \ref{thm:uniquesystemgeneralcase}.]
Let $\{W_1, \ldots, W_k\}$ be the system of imprimitivity defining $G$. Then $V = W_1 \oplus \cdots \oplus W_k$ and $G = (H_1 \times \cdots \times H_k) \rtimes K$, where the action of $H_i$ is nontrivial irreducible primitive on $W_i$, and trivial on $W_j$ for $j \neq i$. Furthermore, the action of $K$ on $\{W_1, \ldots, W_k\}$ is faithful and transitive. We denote the base group $H_1 \times \cdots \times H_k$ by $M$.

Suppose that there is another nonrefinable system of imprimitivity, say $V = Z_1 \oplus \cdots \oplus Z_\ell$ such that $\ell > 1$ and $G$ acts on $\{Z_1, \ldots, Z_\ell\}$. Since $G$ is irreducible, the action on $\{Z_1, \ldots, Z_\ell\}$ must be transitive. Furthermore, the action of $N_G(Z_i)$ on $Z_i$ must be irreducible and primitive \cite[Theorem 15.1]{SuprunenkoBook}.

Let $s > 0$ be minimal such that $Z_i \cap (W_{j_1} \oplus \cdots \oplus W_{j_s}) \neq 0$ for some $1 \leq i \leq \ell$ and $1 \leq j_1 < \cdots < j_s \leq k$. First consider the case where $s = 1$, so $Z_i \cap W_j \neq 0$ for some $i$ and $j$. Then $$(Z_1 \cap W_j) \oplus \cdots \oplus (Z_\ell \cap W_j)$$ is a non-zero $N_G(W_j)$-submodule of $W_j$. Since $N_G(W_j)$ acts irreducibly on $W_j$, we have $W_j = (Z_1 \cap W_j) \oplus \cdots \oplus (Z_\ell \cap W_j)$. Furthermore, the action of $N_G(W_j)$ is primitive, so $W_j = Z_i \cap W_j$. Repeating this argument for $Z_i$, we see that $Z_i = Z_i \cap W_j$, so $Z_i = W_j$ and $\{W_1, \ldots, W_k\} = \{Z_1, \ldots, Z_\ell\}$.

Therefore we can suppose that $s > 1$ in what follows. Let $\{Z_{i_1}, \ldots, Z_{i_r}\}$ be the orbit of $Z_i$ under the base group $M$. For $1 \leq t \leq r$, set $$Q_t := Z_{i_t} \cap (W_{j_1} \oplus \cdots \oplus W_{j_s}).$$ Then $Q_1 \oplus \cdots \oplus Q_r$ is an $\F[M]$-submodule of $W_{j_1} \oplus \cdots \oplus W_{j_s}$, so by Lemma \ref{lemma:submodulesofmfree} and the minimality of $s$ we conclude that $$W_{j_1} \oplus \cdots \oplus W_{j_s} = Q_1 \oplus \cdots \oplus Q_r.$$
    
Let $v \in Z_i \cap (W_{j_1} \oplus \cdots \oplus W_{j_s})$ be non-zero, and write $v = w_1 + \cdots + w_s$, where $w_t \in W_{j_t}$. Note that each $w_t$ is non-zero by the minimality of $s$. For $h_t \in H_{j_t}$ ($1 \leq t \leq s-1$), we define $$v_{h_1, \ldots, h_{s-1}} := h_1 w_1 + \cdots + h_{s-1} w_{s-1} + w_s.$$ Since $v \in Z_i$ and $M$ acts on $W_{j_1} \oplus \cdots \oplus W_{j_s} \subseteq Z_{i_1} \oplus \cdots \oplus Z_{i_r}$, each $v_{h_1, \ldots, h_{s-1}}$ is contained in some $Z_{i_t}$. 

We claim that $v_{h_1, \ldots, h_{s-1}}$ and $v_{h_1', \ldots, h_{s-1}'}$ can be contained in the same $Z_{i_t}$ only if they are equal. Indeed, if $v_{h_1, \ldots, h_{s-1}}$ and $v_{h_1', \ldots, h_{s-1}'}$ are both contained in $Z_{i_t}$, then $$v_{h_1, \ldots, h_{s-1}} - v_{h_1', \ldots, h_{s-1}'} = (h_1 - h_1')w_1 + \cdots + (h_{s-1} - h_{s-1}')w_{s-1}$$ is contained in $Z_{i_t} \cap (W_{j_1} \oplus \cdots \oplus W_{j_{s-1}})$, and thus must be zero by the minimality of $s$. 
		
It follows then that $r \geq |\Pi_1| \cdots |\Pi_{s-1}|$, where $\Pi_t$ is the $H_{j_t}$-orbit of $w_t$. Note that $|\Pi_t| \geq 2$ for all $1 \leq t \leq s-1$, since each $w_t$ is nonzero, and since $H_{j_t}$ acts nontrivially. Furthermore, we have $r \leq s$ by Lemma \ref{lemma:directprodcrimpr}, so $$s \geq r \geq |\Pi_1| \cdots |\Pi_{s-1}| \geq 2^{s-1},$$ which forces $s = r = 2$ and $|\Pi_1| = 2$.

Write $\Pi_1 = \{w_1, w_1'\}$. Then $w_1 + w_1'$ is fixed by the action of $H_{j_1}$ and thus $w_1 + w_1' = 0$. Hence $hw_1 = \pm w_1$ for all $h \in H_{j_1}$. We conclude then from the irreducibility of $H_{j_1}$ that $\dim W_j = 1$ for all $j$, and furthermore $|H| = 2$. Note that this also forces $\operatorname{char} \F \neq 2$.

To complete the proof of the theorem, it remains to show that $n$ is even and $K \leq C_2 \wr S_{n/2}$. To this end, we first adapt an argument from \cite[p. 276]{ShephardTodd54} to show that $\dim Z_{i} = 1$. Suppose, for the sake of contradiction, that $\dim Z_{i} > 1$. Let $h \in H_{j_1}$ be such that $hw_1 = -w_1$. Since $h$ acts trivially on $W_t$ for $t \neq j_1$, it follows that the fixed point space $V^h$ has dimension $n-1$. Thus $Z_{i}$ has nonzero intersection with $V^h$, which implies that $h Z_{i} = Z_{i}$ since $G$ acts on $\{Z_1, \ldots, Z_\ell\}$. So then both $v = w_1 + w_2$ and $hv = -w_1 + w_2$ would be contained in $Z_{i}$, which implies that $v + hv = 2w_2 \in Z_{i}$. Thus $w_2 \in Z_{i}$ since $\operatorname{char} \F \neq 2$, so we have a contradiction due to $Z_{i} \cap W_{j_2} = 0$.

Therefore we have $\ell = k = n$ and $\dim Z_j = 1$ for all $1 \leq j \leq n$. Note that now $W_{j_1} \oplus W_{j_2} = Z_{i_1} \oplus Z_{i_2}$, and $\{Z_{i_1}, Z_{i_2}\}$ is an orbit for the action of $M$ on $\{Z_1, \ldots, Z_n\}$. Since $M$ is a normal subgroup of $G$ and since $G$ acts transitively on $\{Z_1, \ldots, Z_n\}$, every $M$-orbit is of order $2$, and so $n$ is even. By relabeling the summands if necessary, we can assume that the $M$-orbits are $$\{Z_1, Z_2\},\ \ldots,\ \{Z_{n-1}, Z_n\}$$ and furthermore that we have $$W_1 \oplus W_2 = Z_1 \oplus Z_2,\ \ldots,\ W_{n-1} \oplus W_n = Z_{n-1} \oplus Z_n.$$ Thus $G$ acts on the set of pairs $\{ \{W_1, W_2\}, \ldots, \{W_{n-1}, W_n\} \}$, which shows that $K \leq C_2 \wr S_{n/2}$.\end{proof}

\begin{remark}\label{remark:exception}The exception in Theorem \ref{thm:uniquesystemgeneralcase} is a genuine exception. In this case $H$ is cyclic of order $2$, so $H = \{ \pm 1\} \leq \GL_1(\F)$ and $\operatorname{char} \F \neq 2$. We can write $G = (H_1 \times \cdots \times  H_n) \rtimes K,$ where $H_i = \langle \sigma_i \rangle$ is cyclic of order $2$. Furthermore $n$ is even, and $K$ is a transitive subgroup of $C_2 \wr S_{n/2}$. Thus we can find a basis $\{e_1, \ldots, e_n \}$ of $V = \F^n$ such that $G$ acts as follows: \begin{align*} \sigma_i(e_i) &= -e_i & \text{for all } i, \\ \sigma_i(e_j) &= e_j & \text{for all } i \neq j, \\ \pi e_i &= e_{\pi(i)} & \text{for all } \pi \in K. \end{align*} Moreover we can assume that $K$ acts on the pairs $\{ \{e_1,e_2 \}, \ldots, \{e_{n-1},e_n \} \}$. Now $\{ \langle e_1 \rangle, \ldots, \langle e_n \rangle \}$ is the system of imprimitivity that defines $G$, and it is clear from the action that the decomposition $$V = \langle e_1 + e_2 \rangle \oplus \langle e_1 - e_2 \rangle \oplus \cdots \oplus \langle e_{n-1} + e_n \rangle \oplus \langle e_{n-1} - e_n \rangle$$ provides another system of imprimitivity for $G$. If there exists $\lambda \in \F$ with $\lambda^2 = -1$, then $$V = \langle e_1 + \lambda e_2 \rangle \oplus \langle e_1 - \lambda e_2 \rangle \oplus \cdots \oplus \langle e_{n-1} + \lambda e_n \rangle \oplus \langle e_{n-1} - \lambda e_n \rangle$$ gives also a system of imprimitivity for $G$. For $n = 2$, these examples appear in \cite[Remark 2.8]{Cohen76}.

With a few more arguments, we can describe all systems of imprimitivity for $G$. The proof of Theorem \ref{thm:uniquesystemgeneralcase} shows that any system of imprimitivity distinct from $\{ \langle e_1 \rangle, \ldots, \langle e_n \rangle \}$ must be of the form $\{Z_1, \ldots, Z_n\}$, where $\dim Z_i = 1$ for all $1 \leq i \leq n$ and $Z_i \cap \langle e_j \rangle = 0$ for all $1 \leq i,j \leq n$. Furthermore, the action of $K$ on $\{e_1, \ldots, e_n\}$ has a system of imprimitivity $\{ \{ f_1, f_2 \}, \ldots, \{ f_{n-1}, f_n \} \}$ (possibly different from $\{ \{ e_{1}, e_{2} \}, \ldots, \{ e_{n-1}, e_{n} \} \}$) such that $$Z_1 \oplus Z_2 = \langle f_1 \rangle \oplus \langle f_2 \rangle,\ \ldots,\ Z_{n-1} \oplus Z_n = \langle f_{n-1} \rangle \oplus \langle f_n \rangle.$$ Therefore $Z_1 = \langle f_1 + \lambda f_2 \rangle$ and $Z_2 = \langle f_{1} + \mu f_{2} \rangle$ for some $\lambda, \mu \in \F \setminus \{0\}$. An element of $G$ for which $f_{1} \mapsto f_{1}$ and $f_{2} \mapsto -f_{2}$ must map $Z_1$ to $Z_2$, so we conclude that $\mu = -\lambda$. Since $K$ acts transitively, there exists an element of $G$ which swaps $f_1$ and $f_2$. Such an element acts on $\{Z_1, Z_2\}$ and maps $f_1 + \lambda f_2$ to $\lambda (f_1 + \lambda^{-1} f_2)$, so $\lambda^{-1} = \lambda$ or $\lambda^{-1} = -\lambda$. Furthermore, the action of $K$ on the pairs $\{f_i, f_{i+1}\}$ is transitive, so $Z_i = \langle f_i \pm \lambda f_{i+1} \rangle$ and $Z_{i+1} = \langle f_i \mp \lambda f_{i+1} \rangle$ for all $1 \leq i < n$ odd.

We conclude then that any system of imprimitivity distinct from $\{ \langle e_1 \rangle, \ldots, \langle e_n \rangle \}$ corresponds to a decomposition $$V = \langle f_1 + \lambda f_2 \rangle \oplus \langle f_1 - \lambda f_2 \rangle \oplus \cdots \oplus \langle f_{n-1} + \lambda f_n \rangle \oplus \langle f_{n-1} - \lambda f_n \rangle,$$ where $\{ \{ f_1, f_2 \}, \ldots, \{ f_{n-1}, f_n \} \}$ is some system of imprimitivity for the action of $K$ on $\{e_1, \ldots, e_n\}$, and $\lambda \in \F$ is such that $\lambda^2 = \pm 1$.\end{remark}

\section{Applications}\label{section:applications}

Our original motivation for Theorem \ref{thm:uniquesystemgeneralcase} was in the problem of classifying maximal irreducible solvable subgroups of $\GL_n(\F)$. It follows from \cite[Theorem 15.4]{SuprunenkoBook} that if $G \leq \GL_n(\F)$ is maximal irreducible solvable, then either:

\begin{enumerate}[(1)] 
\item $G$ is primitive; or
\item $n = dk$ for $k > 1$, and $G = H \wr K$, where $H \leq \GL_d(\F)$ is maximal irreducible primitive solvable and $K \leq S_k$ is maximal transitive solvable.
\end{enumerate}

Note that the groups in case (2) are not always maximal solvable. For example, the imprimitive subgroup $\GL_1(q) \wr C_2$ is not maximal solvable in $\GL_2(q)$ if $q = 3$ or $q = 5$. When are they maximal solvable?  As a corollary of Theorem \ref{thm:uniquesystemgeneralcase}, we can reduce this question to the problem of determining when such $H \wr K$ lie in a primitive solvable subgroup of $\GL_n(\F)$.

\begin{seur}\label{cor:imprimitivemaximalsoluble}
Suppose that $n = dk$ with $k > 1$. Let $G = H \wr K \leq \GL_n(\F)$, where $H \leq \GL_d(\F)$ is maximal irreducible primitive solvable and $K \leq S_k$ is maximal solvable transitive. Then the following statements are equivalent:

\begin{enumerate}[\normalfont (i)]
\item $G$ is not maximal solvable in $\GL_n(\F)$.
\item $k = \ell' \ell$ for some $\ell' > 1$ such that $K = X \wr Y$, where $X \leq S_{\ell'}$ and $Y \leq S_{\ell}$ are maximal transitive solvable, and $H \wr X$ is contained in a maximal irreducible primitive solvable subgroup of $\GL_{d\ell'}(\F)$.
\end{enumerate}

\end{seur}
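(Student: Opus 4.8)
The plan is to prove the two implications separately, using the associativity of wreath products in one direction and Theorem~\ref{thm:uniquesystemgeneralcase} together with the dichotomy \cite[Theorem 15.4]{SuprunenkoBook} in the other.

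For (ii)$\Rightarrow$(i), I would identify $\F^n$ with $(\F^{d\ell'})^{\ell}$ and use the canonical associativity $G = H \wr K = H \wr (X \wr Y) = (H \wr X) \wr Y$ of imprimitive wreath products. If $H \wr X$ lies in a maximal irreducible primitive solvable $P \le \GL_{d\ell'}(\F)$, then $G = (H \wr X) \wr Y \le P \wr Y =: L$, and $L$ is solvable and irreducible since $P$ is irreducible and $Y$ is transitive. The inclusion $H \wr X < P$ is proper because $H \wr X$ is imprimitive (it preserves the system of the $\ell' > 1$ summands inside $\F^{d\ell'}$) whereas $P$ is primitive; hence $(H \wr X) \wr Y < P \wr Y$, and $G$ is not maximal solvable.

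For (i)$\Rightarrow$(ii), I would choose $L$ maximal solvable with $G < L \le \GL_n(\F)$; then $L$ is irreducible, so by \cite[Theorem 15.4]{SuprunenkoBook} either $L$ is primitive or $L = P \wr \tilde Y$ with $P \le \GL_D(\F)$ maximal irreducible primitive solvable and $\tilde Y \le S_\ell$ maximal transitive solvable, $n = D\ell$. The primitive case is exactly (ii) with $\ell = 1$, $\ell' = k$, $X = K$, $Y$ trivial and $P = L$. In the imprimitive case the defining system $\{U_1, \dots, U_\ell\}$ of $L$ is $G$-invariant and, by irreducibility of $G$, transitive; hence it is a system of imprimitivity for $G$, which refines to a nonrefinable one. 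By Theorem~\ref{thm:uniquesystemgeneralcase} this refinement is the defining system $\{W_1, \dots, W_k\}$ of $G$, so each $U_i = \bigoplus_{j \in B_i} W_j$ for a block $B_i$, and transitivity forces $|B_i| = \ell' := k/\ell$ for all $i$; thus $k = \ell'\ell$ and $D = d\ell'$. Restricting the stabilizer $N_G(U_1)$ to $U_1$ gives $H \wr X \le N_L(U_1)^{U_1} = P$, where $X := \mathrm{Stab}_K(B_1)^{B_1} \le S_{\ell'}$ is the action of the block stabilizer. Moreover $\ell' > 1$: if $\ell' = 1$ then $\{U_i\} = \{W_j\}$ and $X$ is trivial, so $H = H \wr X \le P$ with both maximal irreducible primitive solvable in $\GL_d(\F)$, whence $H = P$ and $L = H \wr \tilde Y$; then $K \le \tilde Y$ and maximality of $K$ gives $K = \tilde Y$ and $G = L$, a contradiction.

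It then remains to show $K = X \wr Y$, where $Y \le S_\ell$ is the (transitive) block action of $K$ on $\mathcal B = \{B_1, \dots, B_\ell\}$ and $X$ is as above (transitive on $B_1$, since block stabilizers of transitive groups act transitively on their blocks). Granting this wreath decomposition, maximality of $X$ and of $Y$ among transitive solvable groups is automatic from the maximality of $K$, since enlarging either factor would enlarge $K$ within its block structure; together with $H \wr X \le P$ this yields (ii). The decomposition $K = X \wr Y$ is the crux and the main obstacle: it is the permutation-group analogue of \cite[Theorem 15.4]{SuprunenkoBook}, asserting that a maximal transitive solvable group carrying a nontrivial block system is a wreath product of maximal transitive solvable groups. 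Since $K$ already has top group $Y$ inside the block stabilizer $S_{\ell'} \wr S_\ell$, proving this amounts to showing $K \le X \wr Y$, i.e.\ that every element of $K$ inducing a nontrivial block permutation has its ``twist'' components inside $X$; such a twist a priori lies only in $N_{S_{\ell'}}(X)$, and the essential work is to show, using the maximality of $K$, that it can be normalized into $X$ (a genuinely non-maximal $K$ can fail this). Finally, the exceptional case of Theorem~\ref{thm:uniquesystemgeneralcase} (here $d = 1$ and $|H| = 2$, so $\F = \F_3$ and $H = \F_3^\times$) must be treated separately, since uniqueness of the nonrefinable system then fails; in this case all systems have one-dimensional components and the reduction above can be carried out by hand.
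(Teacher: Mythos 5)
Your skeleton matches the paper's proof: (ii)$\Rightarrow$(i) by associativity of the wreath product, and (i)$\Rightarrow$(ii) by taking a maximal solvable overgroup $G_0$ via Zassenhaus, splitting into the primitive and imprimitive cases, and in the imprimitive case using Theorem \ref{thm:uniquesystemgeneralcase} to force the defining system of $G$ to refine that of $G_0 = H_0 \wr K_0$, yielding the blocks $B_i$, the groups $X = N_K(B_1)^{B_1}$ and $Y = K^{\mathcal{B}}$, and the containment $H \wr X \leq H_0$. Your explicit treatment of $\ell' = 1$ is a small point the paper leaves implicit. However, you have mislocated the difficulty. The containment $K \leq X \wr Y$ is \emph{not} a gap requiring maximality, and your claim that "a genuinely non-maximal $K$ can fail this" is false: it is the standard imprimitive (Krasner--Kaloujnine) embedding, valid for every transitive group with a block system. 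Choosing coset representatives $t_C \in K$ with $t_C(B_1) = C$ for each block $C$, the twist of $g \in K$ at the coordinate $C$ is $(t_{g(C)}^{-1}\, g\, t_C)|_{B_1}$, which maps $B_1 \to C \to g(C) \to B_1$ and hence lies in $N_K(B_1)$, so its induced permutation lies in $X$ by the very definition of $X$ as the \emph{full} induced action of the block stabilizer --- not merely in $N_{S_{\ell'}}(X)$. Maximality of $K$ enters only afterwards: $X$ and $Y$ are quotients of subgroups of $K$, hence solvable, and $X \wr Y$ is transitive, so $K \leq X \wr Y$ forces $K = X \wr Y$; and then $X$, $Y$ must themselves be maximal transitive solvable, since enlarging either would enlarge $X \wr Y = K$.

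The second genuine omission is the exceptional case of Theorem \ref{thm:uniquesystemgeneralcase}, which you dismiss as doable "by hand". The paper does not rerun the block argument there; it argues directly that (ii) holds unconditionally: maximality of $H \leq \GL_1(\F)$ with $|H| = 2$ forces $\F = \F_3$ and $H = \{\pm 1\}$; since $K \leq C_2 \wr S_{n/2}$ normalizes the base group $C_2^{n/2}$, maximality of $K$ forces $K \supseteq C_2^{n/2}$, whence $K = C_2 \wr T$ with $T \leq S_{n/2}$ maximal transitive solvable; and $H \wr C_2 \leq \GL_2(3)$, which is solvable, irreducible and primitive, so (ii) holds with $X = C_2$, $Y = T$, $\ell' = 2$. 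Without some such argument your proof of (i)$\Rightarrow$(ii) is incomplete precisely where uniqueness of the nonrefinable system fails.
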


\begin{proof}
If (ii) holds, then $G$ is not maximal solvable, since $G = H \wr (X \wr Y) = (H \wr X) \wr Y < H_0 \wr Y$ for some maximal irreducible primitive solvable subgroup $H_0$ of $\GL_{d \ell'}(\F)$. For the other direction, suppose that $G$ is not maximal solvable. By a theorem of Zassenhaus \cite[Satz 8]{Zassenhaus37}, there exists a maximal solvable subgroup $G_0 \leq \GL_n(\F)$ that contains $G$. 

If $G_0$ is primitive, then (ii) holds with $X = K$ and $Y = 1$. Suppose then that $G_0$ is imprimitive. In this case, by \cite[Theorem 15.4]{SuprunenkoBook} we have $G_0 = H_0 \wr K_0$ for some $H_0 \leq \GL_e(\F)$ maximal irreducible primitive solvable and $K_0 \leq S_{\ell}$ maximal transitive solvable, where $n = e \ell$ for $\ell > 1$. 

We assume first that $G$ is not as in the exceptional case of Theorem \ref{thm:uniquesystemgeneralcase}. Write $\F^n = W_1 \oplus \cdots \oplus W_k = Z_1 \oplus \cdots \oplus Z_{\ell},$ where $\{W_1, \ldots, W_k\}$ and $\{Z_1, \ldots, Z_\ell\}$ are the systems of imprimitivity defining $G$ and $G_0$, respectively. Applying Theorem \ref{thm:uniquesystemgeneralcase} to $G$, it follows that $\{W_1, \ldots, W_k\}$ must be a refinement of $\{Z_1, \ldots, Z_\ell\}$. In other words, we conclude that $\ell$ divides $k$, and for all $1 \leq i \leq \ell$ we have $$Z_i = W_1^{(i)} \oplus \cdots \oplus W_{k/\ell}^{(i)}$$ for some subset $B_i := \{W_1^{(i)}, \ldots, W_{k/\ell}^{(i)}\}$ of $\{W_1, \ldots, W_k\}$.

Therefore the sets $\{B_1, \ldots, B_{\ell} \}$ form a block system for the action of $K$ on $\{W_1,\ldots,W_k\}$, so $K$ is a subgroup of $X \wr Y$, where $X \leq S_{k/\ell}$ is the action of $N_{K}(B_1)$ on $B_1$, and $Y \leq K_0$ is the action of $K$ on $\{Z_1, \ldots, Z_{\ell}\}$. Furthermore, in this case we have $H \wr X \leq H_0$. By the maximality of $K$ we must have $K = X \wr Y$ with $X$ and $Y$ maximal transitive solvable, so (ii) holds.

What remains then is to consider the exceptional case of Theorem \ref{thm:uniquesystemgeneralcase}, in which case $n = k$, $d = 1$, and $H \leq \GL_1(\F)$ is cyclic of order $2$. Furthermore, in this case $n$ is even and $K$ is a transitive subgroup of $C_2 \wr S_{n/2}$. Since $H$ is assumed to be maximal solvable, we have $H = \GL_1(\F)$, so $\F = \F_3$ and $H = \{ \pm 1 \}$. Now $K$ normalizes the elementary abelian base group $C_2^{n/2}$ of $C_2 \wr S_{n/2}$, so by maximality $K$ must contain $C_2^{n/2}$. Thus $K = C_2 \wr T$ for some maximal transitive solvable subgroup $T$ of $S_{n/2}$. Since $\GL_2(\F) = \GL_2(3)$ is solvable, we conclude that (ii) holds with $X = C_2$ and $Y = T$.

\end{proof}

With similar arguments, we can apply Theorem \ref{thm:uniquesystemgeneralcase} to the problem of describing the inclusions between irreducible wreath product subgroups $H_1 \wr K_1$ of $\GL_n(\F)$, where $H_1$ is primitive. The following corollary of Theorem \ref{thm:uniquesystemgeneralcase} provides a solution in most cases.

\begin{seur}\label{cor:application2}
Suppose that $n = dk$, where $k > 1$. Let $G_1 = H_1 \wr K_1 \leq \GL_n(\F)$, where $H_1 \leq \GL_d(\F)$ is nontrivial irreducible primitive and $K_1 \leq S_k$ is transitive. Suppose that $G_1$ is not one of the exceptions of Theorem \ref{thm:uniquesystemgeneralcase}. Then $G_1$ is contained in an imprimitive subgroup $H_2 \wr K_2$ of $\GL_n(\F)$ if and only if all of the following conditions hold:

\begin{enumerate}[\normalfont (i)]
\item $n = e \ell$, $H_2 \leq \GL_e(\F)$ and $K_2 \leq S_{\ell}$ with $\ell > 1$ dividing $k$;
\item $K_1 \leq X \wr Y$, where $X \leq S_{k / \ell}$ and $Y \leq K_2$;
\item $H_1 \wr X \leq H_2$.
\end{enumerate}
\end{seur}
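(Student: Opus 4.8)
The plan is to prove both implications, with the converse being a short wreath-product computation in the spirit of the opening paragraph of the proof of Corollary \ref{cor:imprimitivemaximalsoluble}, and the forward implication carrying all the weight and resting on Theorem \ref{thm:uniquesystemgeneralcase}. Assuming (i)--(iii), I would chain the inclusions
$$G_1 = H_1 \wr K_1 \leq H_1 \wr (X \wr Y) = (H_1 \wr X) \wr Y \leq H_2 \wr K_2,$$
where the first inclusion uses $K_1 \leq X \wr Y$, the middle equality is the standard associativity of wreath products (compatible with the decomposition $\F^n = \bigoplus_j W_j$ and the dimension count $e = d(k/\ell)$), and the final inclusion uses $H_1 \wr X \leq H_2$ together with $Y \leq K_2$. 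This settles the ``if'' direction.

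For the forward direction, let $\{W_1, \ldots, W_k\}$ be the defining system of imprimitivity of $G_1$ and let $\{Z_1, \ldots, Z_\ell\}$ be the defining system of the imprimitive group $H_2 \wr K_2$, so $\dim W_j = d$ and $\dim Z_i = e$. The first key observation is that $\{Z_1, \ldots, Z_\ell\}$ is itself a system of imprimitivity for $G_1$: indeed $G_1$ is irreducible and, being a subgroup of $H_2 \wr K_2$, permutes the $Z_i$. Refining $\{Z_1, \ldots, Z_\ell\}$ to a nonrefinable system yields a nonrefinable system of imprimitivity for $G_1$, which by Theorem \ref{thm:uniquesystemgeneralcase} (applicable since $G_1$ is not an exception) must coincide with $\{W_1, \ldots, W_k\}$. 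Hence $\{W_1, \ldots, W_k\}$ refines $\{Z_1, \ldots, Z_\ell\}$, i.e.\ each $Z_i = \bigoplus_{j \in B_i} W_j$ for a partition $\{B_1, \ldots, B_\ell\}$ of $\{1, \ldots, k\}$. Comparing dimensions gives $|B_i| = e/d = k/\ell$, so $\ell$ divides $k$ and (i) holds.

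Next I would extract the permutation data. Since the base group $M = H_1 \times \cdots \times H_k$ fixes every $W_j$, it fixes every $Z_i$, so the action of $G_1$ on $\{Z_1, \ldots, Z_\ell\}$ factors through $K_1$ and $\{B_1, \ldots, B_\ell\}$ is a block system for $K_1$ acting on $\{W_1, \ldots, W_k\}$. Taking $X \leq S_{k/\ell}$ to be the action of $N_{K_1}(B_1)$ on $B_1$ and $Y$ to be the action of $K_1$ on $\{Z_1, \ldots, Z_\ell\}$, this gives $K_1 \leq X \wr Y$; moreover $Y \leq K_2$ because the $G_1$-action on $\{Z_1, \ldots, Z_\ell\}$ is the restriction of the $K_2$-action, so (ii) holds. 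Finally, for (iii), I would identify the restriction to $Z_1 = \bigoplus_{j \in B_1} W_j$ of the stabiliser $N_{G_1}(Z_1) = M \rtimes N_{K_1}(B_1)$: the factors $H_j$ with $j \notin B_1$ act trivially on $Z_1$, while $\prod_{j \in B_1} H_j$ together with the permutations $X$ of the $W_j$ inside $B_1$ restrict precisely to $H_1 \wr X \leq \GL_e(\F)$. Since this restricted stabiliser lies inside $N_{H_2 \wr K_2}(Z_1)|_{Z_1} = H_2$, we obtain $H_1 \wr X \leq H_2$, which is (iii).

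I expect the main obstacle to be the careful verification of (iii): one must check that the image of $N_{G_1}(Z_1)$ in $\GL(Z_1)$ is exactly the wreath product $H_1 \wr X$ under the identification $Z_1 = \bigoplus_{j\in B_1} W_j$, and that it sits inside the primitive component $H_2$ of $N_{H_2 \wr K_2}(Z_1)$ acting on $Z_1$. The uniqueness input from Theorem \ref{thm:uniquesystemgeneralcase} is what makes the refinement step unambiguous; without it the blocks $B_i$ need not be unions of the $W_j$, and the whole reduction would break down, which is precisely why the exceptional case must be excluded in the hypotheses.
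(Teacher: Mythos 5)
Your proposal is correct and follows essentially the same route as the paper: the ``if'' direction is the same chain $H_1 \wr K_1 \leq (H_1\wr X)\wr Y \leq H_2\wr K_2$, and the ``only if'' direction is exactly the paper's argument (which it delegates to paragraphs 3--4 of the proof of Corollary \ref{cor:imprimitivemaximalsoluble}): use Theorem \ref{thm:uniquesystemgeneralcase} to force $\{W_1,\ldots,W_k\}$ to refine $\{Z_1,\ldots,Z_\ell\}$, read off the block system $\{B_1,\ldots,B_\ell\}$ to get $K_1\leq X\wr Y$ with $Y\leq K_2$, and identify the restriction of $N_{G_1}(Z_1)$ to $Z_1$ with $H_1\wr X$ inside $H_2$. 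Your write-up is in fact more detailed than the paper's at the points you flag (nonrefinability of $\{W_j\}$ and the verification of (iii)), but there is no substantive difference in method.
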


\begin{proof}If conditions (i) -- (iii) hold, it is clear that $H_1 \wr K_1 \leq H_1 \wr (X \wr Y) = (H_1 \wr X) \wr Y \leq H_2 \wr K_2.$ The other direction of the claim follows from Theorem \ref{thm:uniquesystemgeneralcase}, by arguing as in the proof of Corollary \ref{cor:imprimitivemaximalsoluble} (paragraphs 3--4).\end{proof}

What about when $G_1 = H_1 \wr K_1 \leq \GL_n(\F)$ is as in the exception of Theorem \ref{thm:uniquesystemgeneralcase}? In this case we know all the systems of imprimitivity for $G_1$ (Remark \ref{remark:exception}), which readily gives a description of the wreath product subgroups of $\GL_n(\F)$ that contain $G_1$.

\end{document}